\newtheorem{theorem}{Theorem}
\newtheorem{lemma}[theorem]{Lemma}
\newtheorem{definition}{Definition}
\begin{document}

\title{Period and toroidal knot mosaics}

\author[S. Oh]{Seungsang Oh}
\address{Department of Mathematics, Korea University, Seoul 02841, Korea}
\email{seungsang@korea.ac.kr}
\author[K. Hong]{Kyungpyo Hong}
\address{National Institute for Mathematical Sciences, Daejeon 34047, Korea}
\email{kphong@nims.re.kr}
\author[H. Lee]{Ho Lee}
\address{Department of Mathematical Sciences, KAIST, Daejeon 34141, Korea}
\email{figure8@kaist.ac.kr}
\author[H. J. Lee]{Hwa Jeong Lee}
\address{School of Undergraduate Studies, DGIST, Daegu 42988, Korea}
\email{hjwith@dgist.ac.kr}
\author[M. J. Yeon]{Mi Jeong Yeon}
\address{Department of Mathematics, Kyung Hee University, Seoul 02447, Korea}
\email{ym-501@daum.net}

\thanks{Mathematics Subject Classification 2010: 05C30, 57M25, 81P99}
\thanks{The corresponding author(Seungsang Oh) was supported by the National Research Foundation of Korea(NRF) grant funded by the Korea government(MSIP) (No. NRF-2014R1A2A1A11050999).}
\thanks{Hwa Jeong Lee was supported by Basic Science Research Program through the National Research Foundation of Korea (NRF) funded by the Ministry of Science, ICT $\&$ Future Planning (NRF-2015R1C1A2A01054607).}

\begin{abstract}
Knot mosaic theory was introduced by Lomonaco and Kauffman in the paper on 
`Quantum knots and mosaics' to give a precise and workable definition of quantum knots, 
intended to represent an actual physical quantum system.
A knot $(m,n)$--mosaic is an $m \! \times \! n$ matrix whose entries are eleven mosaic tiles,
representing a knot or a link by adjoining properly.
In this paper we introduce two variants of knot mosaics:
period knot mosaics and toroidal knot mosaics,
which are common features in physics and mathematics.
We present an algorithm producing the exact enumeration of
period knot $(m,n)$--mosaics for any positive integers $m$ and $n$,
toroidal knot $(m,n)$--mosaics for co-prime integers $m$ and $n$,
and furthermore toroidal knot $(p,p)$--mosaics for a prime number $p$.
We also analyze the asymptotics of the growth rates of their cardinality.
\end{abstract}

\maketitle

\section{Introduction}

One of remarkable discovery in knot theory is the Jones polynomial,
and it is not only of mathematical interest but also an essential ingredient 
to quantum theory \cite{J1, J2, K1, K2, L, LK2, SJ}.
In 2004, Lomonaco and Kauffman introduced knot mosaic system to give 
a definition of quantum knot system \cite{LK1, LK3, LK4, LK5}.
This definition is intended to represent an actual physical quantum system.
These quantum knots are superpositions of knots 
whose projections can be constructed as grids consisting of suitably connected tiles.
It is known that all tame knots can be constructed in this way.

In this paper we introduce two variants of a knot mosaic,
which are common features in physics and mathematics.
One is a period knot mosaic in the whole plane whose periodic patches are rectangles.
The other is a toroidal knot mosaic on the torus by identifying opposite boundary edges of 
a knot mosaic properly up to cyclic rotations, due to the topology of the torus.
The latter was introduced by Carlisle and Laufer~\cite{CL}.
Note that if we project a knot onto the torus instead of the plane,
we can usually lower the mosaic number that is another interesting invariant in knot mosaic theory.
This paper is inspired from Question 9 in~\cite{LK3} about the enumeration of knot mosaics,
and  Exercise 1 in~\cite{CL} for knot mosaics on the torus.

The authors have presented
several results in the research program related to the cardinality of knot mosaics
in the series of papers \cite{HLLO1, HLLO2, LHLO, Oh1, OHLL}.
We have developed a partition matrix argument to count knot mosaics of small size~\cite{HLLO2}.
This argument was later generalized to give an algorithm producing 
the exact enumeration of knot mosaics of any sizes,
which uses a recursion formula of so-called state matrices~\cite{OHLL}.
Also we refer~\cite{OhV1} for another application of this algorithm 
to the enumeration of independent vertex sets in grid graphs.

We apply the algorithm used in~\cite{OHLL} to count 
all period knot mosaics and toroidal knot mosaics.
The main difference from the classical knot mosaic theory is 
that one must ensure that the boundary tiles match appropriately.
We first count period knot mosaics in Section~\ref{sec:period}, 
and then count toroidal knot mosaics by considering equivalence classes of them under the cyclic rotations 
in both vertical and horizontal directions in Section~\ref{sec:toroidal}.
We also present the asymptotic behavior of the growth rates of their cardinality.

\section{Terminology and theorems}

Throughout this paper, the term `knot' means either a knot or a link.
Eleven symbols $T_0 \sim T_{10}$ illustrated in Figure~\ref{fig:tiles} are called {\em mosaic tiles\/}.

\begin{figure}[h]
\includegraphics{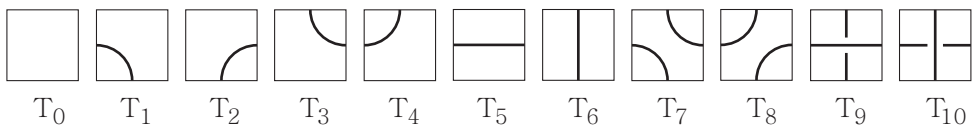}
\caption{Eleven mosaic tiles}
\label{fig:tiles}
\end{figure}

For positive integers $m$ and $n$,
an {\em $(m,n)$--mosaic\/} is an $m \! \times \! n$ matrix $M=(M_{ij})$ of mosaic tiles.
We denote the set of all $(m,n)$--mosaics by $\mathbb{M}^{(m,n)}$.
Note that $\mathbb{M}^{(m,n)}$ has $11^{mn}$ elements.
A {\em connection point\/} of a mosaic tile is defined as the midpoint of a mosaic tile edge
that is also the endpoint of a curve drawn on the tile.
Two tiles in a mosaic are called {\em contiguous\/} if they lie immediately next to each other
in either the same row or the same column.
We also say that two tiles are {\em $\partial$-contiguous\/}
if they lie on the opposite ends in either the same row or the same column.
A mosaic is called {\em suitably connected\/} if any pair of contiguous mosaic tiles
have or do not have connection points simultaneously on their common edge.
We also say that a mosaic is {\em suitably $\partial$-connected\/}
if any pair of $\partial$-contiguous mosaic tiles have or do not have
connection points simultaneously on their edges at the boundary of the mosaic.

\subsection{Knot mosaics}   \hspace{1cm}

We follow the original definition of a knot mosaic in~\cite{LK3}.

\begin{definition}
A knot $(m,n)$--mosaic is a suitably connected $(m,n)$--mosaic whose boundary edges
do not have connection points.
$\mathbb{K}^{(m,n)}$ denotes the subset of $\mathbb{M}^{(m,n)}$ of all knot $(m,n)$--mosaics.
$D^{(m,n)}$ denotes the cardinality of $\mathbb{K}^{(m,n)}$.
\end{definition}

Each knot $(m,n)$--mosaic represents a specific knot.
Two mosaics illustrated in Figure~\ref{fig:mosaic} represent 
a non-knot $(4,3)$--mosaic and the trefoil knot $(4,4)$--mosaic.

\begin{figure}[ht]
\includegraphics{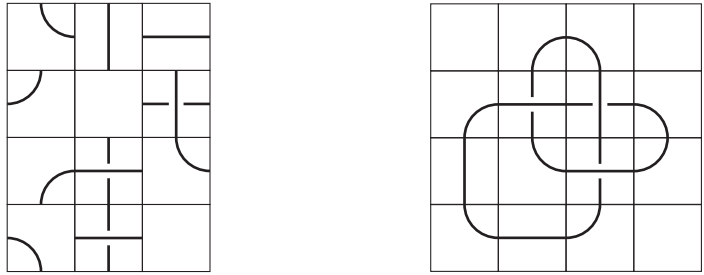}
\caption{Examples of mosaics}
\label{fig:mosaic}
\end{figure}

Several results about $D^{(m,n)}$ were founded by the authors
such as its lower and upper bounds in~\cite{HLLO1} 
and the precise values of $D^{(m,n)}$ for $m,n=4,5,6$ in~\cite{HLLO2}.
Recently the authors also constructed an algorithm
producing the precise value of $D^{(m,n)}$ in general
that uses a recursive matrix-relation that turns out to be remarkably efficient to count knot mosaics.
$\| N \|$ denotes the sum of all entries of a matrix $N$.

\begin{theorem} {\textup{(Oh-Hong-Lee-Lee~\cite{OHLL})}} \label{thm:knot}
For integers $m,n \geq 2$,
$$ D^{(m,n)} = 2 \, \| (X_{m-2}+O_{m-2})^{n-2} \|, $$
where $X_{m-2}$ and $O_{m-2}$ are $2^{m-2} \! \times \! 2^{m-2}$ matrices defined by
$$ X_{k+1} =
\begin{bmatrix} X_k & O_k \\ O_k & X_k  \end{bmatrix}
\ \mbox{and } \
O_{k+1} =
\begin{bmatrix} O_k & X_k \\ X_k & 4  O_k  \end{bmatrix} $$
for $k=0, 1, \dots, m-3$, starting with $1 \! \times \! 1$ matrices
$X_0 =  O_0 = \begin{bmatrix} 1 \end{bmatrix}$.
\end{theorem}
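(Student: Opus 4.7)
The plan is to establish the formula via a transfer matrix technique, processing the knot mosaic column by column, with the transfer matrix decomposed into two pieces $X_{m-2}$ and $O_{m-2}$ that admit a clean recursion in the number of rows. First I would recast knot-mosaic enumeration as a weighted count of $\{0,1\}$-labelings of the interior edges of the $m\times n$ grid: every non-crossing tile is uniquely determined by its pattern of connection points (which must have even parity, i.e., $0$, $2$, or $4$ of its four edges labeled $1$), while the $4$-connection pattern corresponds to two crossing tiles and thus carries weight $2$. The knot boundary condition forces all outer boundary labels to be $0$.

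Next I would process columns from left to right. The state at the vertical interface between two consecutive columns records the pattern of labels on the $m$ vertical edges across it; the interfaces outside columns $1$ and $n$ are forced to the all-zero state. A transfer matrix $T$ assigns to each pair of interface states the weighted count of admissible column fillings. Invoking the even-parity constraint on the top tile (whose top edge is forced to $0$) and on the bottom tile, the first and last interface bits of a valid right-state are determined by the corresponding left-bit and the single interior horizontal edge that the top (resp.\ bottom) tile shares with its neighbor; consequently the effective interface has only $m-2$ free bits, and the transfer matrix lives in the $2^{m-2}$-dimensional space promised by the theorem.

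Splitting this transfer matrix as $T = X_{m-2} + O_{m-2}$ by conditioning on the parity of some interior feature of the column (for instance, the parity of the interior horizontal connections), I would obtain the block recursion of the theorem by further conditioning on the two interface bits introduced when an extra row is appended to the column. Most blocks come out to be simply $X_k$ or $O_k$, but when the newly inserted tile has all four edges carrying connection points, the two crossing choices contribute a factor $2$ and a parity combinatorial factor contributes a further $2$, yielding the entry $4 O_k$ in the $(2,2)$ corner of $O_{k+1}$. The leading factor $2$ and the exponent $n-2$ in the formula come from factoring out the contributions of the first and last columns (whose outer vertical edges are forced to $0$), leaving $n-2$ internal column-transitions plus an overall factor of $2$ from the boundary columns' residual freedom.

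The hardest part is verifying the precise block structure — in particular, justifying the factor $4$ in the $(2,2)$ block of $O_{k+1}$ and the specific $X$/$O$ pattern in the blocks of $X_{k+1}$ and $O_{k+1}$. This requires a meticulous case analysis on the labelings of the new interface bits together with the parity constraint on the inserted tile and the crossing multiplicity, using a carefully chosen recursive indexing of the $2^{m-2}$ states so that each level of recursion corresponds cleanly to the insertion of one new row.
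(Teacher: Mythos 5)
Your overall strategy --- a column-to-column transfer matrix whose states are the connection-point patterns on a vertical interface, built up by a recursion on the number of rows and split into two pieces --- is exactly the state-matrix method that the paper reviews in Section~3 and uses for Theorem~\ref{thm:period}, so the architecture is right. But two of the load-bearing steps are wrong or missing as you state them. First, you assert that every non-crossing tile is determined by its connection-point pattern and that the four-connection pattern ``carries weight $2$'' from the two crossings. In fact \emph{four} of the eleven tiles realize the $lrtb$--cp pattern: the two double-arc tiles $T_7,T_8$ as well as the two crossings $T_9,T_{10}$. The entry $4\,O_k$ in the $(2,2)$ block comes solely from this four-fold choice (the paper's Choice rule); your ``two crossing choices times a further parity combinatorial factor of $2$'' starts from the wrong weight and patches it with a factor that has no justification. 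Second, the decomposition $T=X_{m-2}+O_{m-2}$ is not by ``the parity of some interior feature'': $X_k$ and $O_k$ are the state matrices of one-column mosaics whose bottom edge carries no connection point, respectively a connection point. This specific choice is what closes the recursion: when a new bottom tile is appended, its left-, right- and bottom-edge states are fixed by the quadrant and by the letter $X$/$O$ at level $k+1$, and the even-parity constraint then forces its top-edge state, which is precisely the $b$-state of the column above and hence selects $X_k$ versus $O_k$ in each block. Leaving the conditioning unspecified means the block pattern of $X_{k+1}$ and $O_{k+1}$ --- which you yourself flag as the hardest part --- cannot actually be verified.

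The reduction from $(m,n)$ to $(m-2,n-2)$ and the leading factor $2$ are also only asserted. Since $(X_{m-2}+O_{m-2})^{n-2}$ is the state matrix $N^{(m-2,n-2)}$ and $\|\cdot\|$ sums over all left- and right-states, the formula says $D^{(m,n)}$ is twice the number of \emph{all} suitably connected $(m-2,n-2)$--mosaics with arbitrary boundary connection points. The missing lemma is that every such interior mosaic extends to a knot $(m,n)$--mosaic by adding the boundary frame (top and bottom rows, left and right columns) in exactly two ways; your phrase ``residual freedom of the boundary columns'' names the phenomenon but does not prove that the count is exactly $2$, uniformly over all interior boundary states. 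Without that lemma and the corrected tile count above, the proof does not go through.
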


The algorithm used in Theorem~\ref{thm:knot} will be applied 
in the enumeration of the following period and toroidal knot mosaics.

\subsection{Period knot mosaics}   \hspace{1cm}

We now consider knot mosaics covering the whole plane in periodic patterns,
especially whose periodic patches are rectangles.
Thus any pair of $\partial$-contiguous mosaic tiles of each patch have or do not have
connection points simultaneously on their edges at the boundary.

\begin{definition}
A period knot $(m,n)$--mosaic is a suitably connected and suitably $\partial$-connected $(m,n)$--mosaic.
$\mathbb{K}_{P}^{(m,n)}$ denotes the set of all period knot $(m,n)$--mosaics.
$D_{P}^{(m,n)}$ denotes the cardinality of $\mathbb{K}_{P}^{(m,n)}$.
\end{definition}

\begin{figure}[ht]
\includegraphics{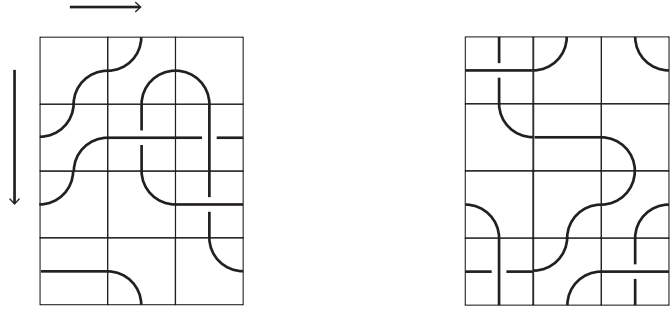}
\caption{Two different period knot mosaics, but the same toroidal knot mosaics}
\label{fig:pertor}
\end{figure}

In Figure~\ref{fig:pertor}, we present two different period knot $(4,3)$--mosaics.
The first main theorem is about an algorithm producing the precise value of $D_{P}^{(m,n)}$.
This theorem will be proved in Section~\ref{sec:period}.

\begin{theorem} \label{thm:period}
For positive integers $m$ and $n$,
$$ D_{P}^{(m,n)} = {\rm tr}(X_m^+ + O_m^+)^n, $$
where $X_m^+$ and $O_m^+$ are $2^m \! \times \! 2^m$ matrices defined by
$$ X_{k+1}^+ = \begin{bmatrix} X_k^+ & O_k^- \\ O_k^- & X_k^+ \end{bmatrix}, \ \  \ \ \
   X_{k+1}^- = \begin{bmatrix} X_k^- & O_k^+ \\ O_k^+ & X_k^- \end{bmatrix}, $$
$$ O_{k+1}^+ = \begin{bmatrix} O_k^+ & X_k^- \\ X_k^- & 4 \, O_k^+ \end{bmatrix} \ \ \mbox{and} \ \ \
   O_{k+1}^- = \begin{bmatrix} O_k^- & X_k^+ \\ X_k^+ & 4 \, O_k^- \end{bmatrix} $$
for $k=0, 1, \dots, m-1$, with
$ X_0^+ = O_0^+ = \begin{bmatrix} 1 \end{bmatrix} \mbox{ and } 
X_0^- = O_0^- = \begin{bmatrix} 0 \end{bmatrix}$.
\end{theorem}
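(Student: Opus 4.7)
The plan is to extend the column-by-column transfer matrix approach used in the proof of Theorem~\ref{thm:knot}. In that proof, one builds a state matrix indexed by binary sequences describing the connection points along the horizontal edges between adjacent columns, and recursively constructs it by appending one row at a time. For period knot mosaics, two modifications are needed: the state matrix must be enlarged so that it tracks all $m$ horizontal edges between columns (not only the $m-2$ interior ones, since now no row is pinned by the boundary-edge condition), and the vertical periodicity constraint inside each column must be built into the state.

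First I would define a $2^m \times 2^m$ column transfer matrix $T$ whose entry $T[s,s']$ counts the number of suitably connected fillings of one column of height $m$ whose left-edge connection pattern is $s \in \{0,1\}^m$, whose right-edge pattern is $s'$, and whose top-edge connection state agrees with its bottom-edge connection state. Concatenating $n$ such columns left-to-right yields a period knot $(m,n)$-mosaic exactly when the rightmost right-edge pattern matches the leftmost left-edge pattern; this horizontal periodicity is the trace condition, so
\[
D_P^{(m,n)} = \mathrm{tr}(T^n).
\]
It then remains to identify $T$ with $X_m^+ + O_m^+$.

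For that identification, I would build each column row by row from the bottom upward, carrying four auxiliary $2^k \times 2^k$ matrices $X_k^+,X_k^-,O_k^+,O_k^-$ that refine the $X_k,O_k$ of Theorem~\ref{thm:knot} by one additional bit. The $X$ versus $O$ distinction is inherited verbatim from Theorem~\ref{thm:knot}, encoding the same local status of the most recently placed tile; the new superscript records whether the top edge of the current partial column currently agrees $(+)$ or disagrees $(-)$ with the fixed bottom edge of the first row. Only the $+$ flavour survives when $k=m$, which is why the full column transfer matrix is precisely $X_m^+ + O_m^+$. The base data $X_0^+=O_0^+=[1]$ and $X_0^-=O_0^-=[0]$ simply assert that an empty sub-column tautologically has its top and bottom edges identified, so the $+$ sector contributes $1$ and the $-$ sector is empty.

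The recursive step is then the same tile-by-tile case analysis as in the proof of Theorem~\ref{thm:knot}, enriched with the effect of each of the eleven mosaic tiles on the top/bottom matching bit. The $2 \times 2$ block form of each $X_{k+1}^\pm, O_{k+1}^\pm$ is parameterised by the new horizontal-edge bit at the side of the newly added row, mirroring Theorem~\ref{thm:knot}'s recursion; the substantive difference is that those tiles which toggle the top-edge bit flip $+\leftrightarrow -$ in the off-diagonal blocks, producing the $O_k^{\mp}$ and $X_k^{\mp}$ entries rather than $O_k^{\pm}$ and $X_k^{\pm}$. The factor $4$ in the lower-right block of $O_{k+1}^\pm$ comes from the four crossing/corner tiles that simultaneously connect all four edges, exactly as in Theorem~\ref{thm:knot}. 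The main obstacle I expect is the careful verification of this enhanced case analysis: one must confirm, tile by tile, that each of the eleven contributions lands in the block predicted by the stated recursion with the correct $\pm$ assignment. Once that bookkeeping is pinned down, the trace identity follows by standard transfer-matrix reasoning.
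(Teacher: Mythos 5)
Your proposal is correct and follows essentially the same route as the paper: refine the row-by-row state matrices of Theorem~\ref{thm:knot} with a $\pm$ superscript recording agreement of the column's top and bottom connection states, obtain the stated block recursion by the same tile-by-tile quadrant analysis (with the factor $4$ from $T_7$--$T_{10}$), multiply the single-column matrices to handle suitable connectedness across columns, and take the trace to enforce the matching of $l$-- and $r$--states. The only differences are cosmetic bookkeeping choices (building each column bottom-up rather than appending tiles at the bottom, and which edge the letter $X$/$O$ refers to), which are equivalent by the vertical symmetry of the tile set.
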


Owing to Theorem~\ref{thm:period}, we get Table~\ref{tab1} of the precise values of $D_{P}^{(n,n)}$
and approximated values of $(D_{P}^{(n,n)})^{\frac{1}{n^2}}$.
$D_{P}^{(n,n)}$ seems to grow in a quadratic exponential rate and
this observation that $(D_{P}^{(n,n)})^{\frac{1}{n^2}}$ steadily decreases is 
of considerable significance\footnote{
{\bf Asymptotic behavior of the growth rate of $D_{P}^{(n,n)}$.} \
Consider a sequence of new matrices $\underline{O}_{k}^+$ satisfying the recurrence relation 
$\underline{O}_{k+1}^+ = \begin{bmatrix} \mathbb{O} & 
\mathbb{O} \\ \mathbb{O} & 4 \, \underline{O}_k^+ \end{bmatrix}$
starting with $\underline{O}_{0}^+ = \begin{bmatrix} 1 \end{bmatrix}$
where $\mathbb{O}$ is the square zero-matrix with an appropriate size.
Since $\underline{O}_{n}^+ \leq O_n^+ \leq X_n^+ + O_n^+$ (compare entrywise) and
$({\rm tr}(\underline{O}_{n}^+)^n)^{\ \frac{1}{n^2}}=4$,
$(D_{P}^{(n,n)})^{\frac{1}{n^2}}$ is always greater than or equal to 4.
For knot mosaics, it is shown~\cite{Oh1} that the limit $\lim_{n \rightarrow \infty} (D^{(n,n)})^{\ \frac{1}{n^2}}$ exists
and lies between $4$ and $\frac{5+ \sqrt{13}}{2} \ (\approx 4.303)$.
The existence of this growth constant relies on the two-variable version of Fekete's lemma and
the supermultiplicative property of $D^{(m,n)}$ in both indices so that
$D^{(m_1+m_2,n)} \geq D^{(m_1,n)} \cdot D^{(m_2,n)}$ and 
$D^{(m,n_1+ n_2)} \geq D^{(m,n_1)} \cdot D^{(m,n_2)}$.
But, for the toroidal case, $D_{P}^{(m,n)}$ looks like satisfying the submultiplicative property
by considering some numerical data.
If it is true, 
then similarly the limit $\lim_{n \rightarrow \infty} (D_{P}^{(n,n)})^{\ \frac{1}{n^2}}$ exists
and must be $\inf_{n \in \, \mathbb{N}} (D_{P}^{(n,n)})^{\ \frac{1}{n^2}}$,
and so it lies between 4 and 4.018454.
}.

\begin{table}[h]
{\footnotesize
\begin{tabular}{ccclccc}      \hline \hline
& $n$ && $D_{P}^{(n,n)}$ && $(D_{P}^{(n,n)})^{\frac{1}{n^2}}$ &  \\    \hline
& 1 && 7 && 7.000000 & \\
& 2 && 359 && 4.352849 & \\
& 3 && 316249 && 4.084269 & \\
& 4 && 4934695175 && 4.034863 & \\
& 5 && 1300161356831107  && 4.023091 & \\
& 6 && 5644698772550125092864 && 4.019872 & \\
& 7 && 399312236302057320966334185472 && 4.018911 & \\
& 8 && 457964061535512648565738757533162536960 && 4.018607 & \\
& 9 && 8496319497954601079390773421978474609756411527168 && 4.018506 & \\
& 10 && 2.54732361646079118531479661606646273328057$\cdots$e+60 && 4.018471 & \\
& 11 && 1.23368125451013250340475002575259970410360$\cdots$e+73 && 4.018459 & \\
& 12 && 9.64949082814445741693576869741862642790187$\cdots$e+86 && 4.018455 & \\
& 13 && 1.21885463463383945911667124257509803352769$\cdots$e+102 && 4.018453 & \\  
\hline \hline
\end{tabular}
}
\vspace{3mm}
\caption{List of $D_{P}^{(n,n)}$ and approximated $(D_{P}^{(n,n)})^{\frac{1}{n^2}}$}
\label{tab1}
\end{table}

\subsection{Toroidal knot mosaics}   \hspace{1cm}

We also consider knot mosaics on the torus rather than in the plane.
These mosaics can be obtained by identifying opposite boundaries of period knot mosaics.
Due to the topology of the torus, 
they must be treated as equivalence classes under the cyclic rotations meridionally and logitudally.
We say two period knot mosaics are {\em equivalent\/}
if one can be obtained from the other by a finite sequence of cyclic rotations of rows and columns.

\begin{definition}
A toroidal knot $(m,n)$--mosaic is an equivalence class of
suitably connected and suitably $\partial$-connected $(m,n)$--mosaics.
$\mathbb{K}_T^{(m,n)}$ denotes the set of all toroidal knot $(m,n)$--mosaics.
$D_T^{(m,n)}$ denotes the cardinality of $\mathbb{K}_T^{(m,n)}$.
\end{definition}

Two examples in Figure~\ref{fig:pertor} represent the same toroidal knot $(4,3)$--mosaic.
To get one from the other, we take cyclic rotations by two rows and one column in the directions of the arrows.
$\mathbb{K}_T^{(m,n)}$ can be obtained from $\mathbb{K}_{P}^{(m,n)}$ by a relevant quotient map.
The next two theorems are about algorithms producing the precise value of $D_T^{(m,n)}$.
These theorems will be proved in Section~\ref{sec:toroidal}.
Here $p | m$ means that $m$ is divisible by $p$.
Define, for a $2^p \! \times \! 2^p$ matrix $A$, 
$${\rm tr}^{(k)}(A) = \sum_{i=0}^{2^p-1} (i \! + \! 1,\alpha(i) \! + \! 1)\mbox{-entry of } A,$$
where $\alpha(i) = 2^k i \ (\mbox{mod } 2^p \! - \! 1)$. 
Especially ${\rm tr}^{(0)}(A) = {\rm tr}(A)$.

\begin{theorem} \label{thm:toroidalmn}
For positive co-prime integers $m$ and $n$,
$$D_T^{(m,n)} = \sum_{p | m, \, q | n} \frac{1}{pq} d_{p,q},$$
where positive integers $d_{p,q}$ are recursively defined by
$$d_{p,q} = D_{P}^{(p,q)} - \sum_{r | p, \, s | q, \, rs \neq pq} d_{r,s}.$$
\end{theorem}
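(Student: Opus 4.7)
The plan is to realise $\mathbb{K}_T^{(m,n)}$ as the orbit space of $\mathbb{K}_P^{(m,n)}$ under the natural action of $G := \mathbb{Z}_m \times \mathbb{Z}_n$ by cyclic row and column rotations, and then organise the orbits by the minimal periods of a representative. The crucial role of the hypothesis $\gcd(m,n)=1$ is that $G \cong \mathbb{Z}_{mn}$ is cyclic, so every subgroup of $G$ is of the product form $H_1 \times H_2$ with $H_1 \leq \mathbb{Z}_m$, $H_2 \leq \mathbb{Z}_n$. Consequently, for each $M \in \mathbb{K}_P^{(m,n)}$, the stabiliser $\mathrm{Stab}(M)$ equals $p\mathbb{Z}_m \times q\mathbb{Z}_n$ for uniquely determined divisors $p | m$ and $q | n$; these $(p,q)$ are the minimal horizontal and vertical periods of $M$, and the orbit of $M$ has size $mn/|\mathrm{Stab}(M)|=pq$.

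For each pair of divisors $p | m$ and $q | n$, let $A_{p,q}$ (resp.\ $B_{p,q}$) denote the set of mosaics in $\mathbb{K}_P^{(m,n)}$ whose minimal periods divide $(p,q)$ (resp.\ are exactly $(p,q)$). I would show that restriction to a fundamental domain, $M \mapsto M|_{[0,p)\times[0,q)}$, yields a bijection $A_{p,q}\to \mathbb{K}_P^{(p,q)}$: suitable connectedness of $M$ inside the fundamental rectangle gives suitable connectedness of the restriction, while suitable connectedness of $M$ across the periodic seams $i=p$ and $j=q$, combined with the suitable $\partial$-connectedness of $M$, provides exactly the suitable $\partial$-connectedness of the restriction (and conversely). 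This identifies $|A_{p,q}| = D_P^{(p,q)}$.

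Since $A_{p,q}=\bigsqcup_{r | p,\, s | q} B_{r,s}$, inclusion--exclusion on the divisor lattice (equivalently, M\"obius inversion) produces the recursion $|B_{p,q}| = D_P^{(p,q)} - \sum_{r | p,\, s | q,\, rs\neq pq}|B_{r,s}|$, which matches the defining recursion for $d_{p,q}$ verbatim; hence $|B_{p,q}|=d_{p,q}$. Finally, every orbit in $\mathbb{K}_T^{(m,n)}$ meets a unique $B_{p,q}$ in exactly $pq$ points, so counting orbits yields $D_T^{(m,n)} = \sum_{p | m,\, q | n} d_{p,q}/(pq)$, as claimed.

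The principal obstacle is verifying that the stabiliser of every period mosaic is a product subgroup, so that the pair $(p,q)$ of minimal periods is even well defined; this step is where the co-primality hypothesis is indispensable, because otherwise the stabiliser could contain diagonal elements such as $(1,1)\in\mathbb{Z}_n\times\mathbb{Z}_n$ that do not arise from separate row- and column-periods and would spoil the two-parameter decomposition. The bijection in the second step requires only a routine check that boundary matching of a $(p,q)$-block is precisely the condition forced by suitable connectedness of its $\mathbb{Z}_m\times\mathbb{Z}_n$-periodic extension.
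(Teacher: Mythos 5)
Your proposal is correct and follows essentially the same route as the paper: stratify $\mathbb{K}_{P}^{(m,n)}$ by the fundamental period $(p,q)$, identify the mosaics whose period divides $(p,q)$ with $\mathbb{K}_{P}^{(p,q)}$ by restriction to a fundamental block, invert over the divisor lattice to get $d_{p,q}$, and divide each stratum by the orbit size $pq$. The only difference is cosmetic: you justify the orbit-size claim by noting that every subgroup of the cyclic group $\mathbb{Z}_m\times\mathbb{Z}_n$ is a product subgroup, whereas the paper reaches the same conclusion by an explicit computation with greatest common divisors showing that a relation $t_{x,y}(M)=M$ with $0<x<p$ or $0<y<q$ would contradict the minimality of $(p,q)$.
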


\begin{theorem} \label{thm:toroidalpp}
$D_T^{(2,2)} = 110$. And for a prime integer $p \geq 3$,
$$D_T^{(p,p)} = \frac{1}{p^2} d_{p^2} + \frac{2}{p} \ \sum_{k=0}^{\frac{p-1}{2}} d_{p_{{(k,1)}}} + 7,$$
where positive integers $d_{p_{{(k,1)}}}$ and $d_{p^2}$ are defined by
$$d_{p_{{(k,1)}}} = {\rm tr}^{(k)}(X_p^{+} + O_p^{+}) -7 \mbox{ and } \
d_{p^2} = D_{P}^{(p,p)} - 2 \sum_{k=0}^{\frac{p-1}{2}} d_{p_{{(k,1)}}} - 7.$$
\end{theorem}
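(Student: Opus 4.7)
The plan is to apply Burnside's lemma to the action of $G = \mathbb{Z}/p\mathbb{Z} \times \mathbb{Z}/p\mathbb{Z}$ on $\mathbb{K}_P^{(p,p)}$ by cyclic rotations of rows and columns, whose orbits are by definition the toroidal knot $(p,p)$-mosaics, combined with a transfer-matrix interpretation of each stabilizer count. The case $p = 2$ would be handled separately by direct enumeration: the five subgroups of $\mathbb{Z}/2\mathbb{Z} \times \mathbb{Z}/2\mathbb{Z}$ have fixed-set sizes computable from the transfer matrices of Theorem~\ref{thm:period}, and Burnside then yields $D_T^{(2,2)} = 110$.

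For prime $p \geq 3$, the group $G$ is elementary abelian of rank two, so apart from the trivial subgroup and $G$ itself there are exactly $p+1$ subgroups of order $p$, which I list as $H_{1,0} = \langle(1,0)\rangle$ and $H_{k,1} = \langle(k,1)\rangle$ for $k = 0, 1, \ldots, p-1$. Any two distinct order-$p$ subgroups generate $G$, so a mosaic fixed by two of them is fixed by all of $G$ and is therefore constant; hence a non-constant period mosaic has stabilizer either trivial (orbit size $p^2$) or equal to exactly one such $H$ (orbit size $p$). Constant period $(p,p)$-mosaics correspond bijectively to valid $(1,1)$-period patterns, and Theorem~\ref{thm:period} applied with $m = n = 1$ gives $D_P^{(1,1)} = \mathrm{tr}(X_1^+ + O_1^+) = 7$ of them, supplying the summand $7$.

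The central identification is $|\mathrm{Fix}(H_{k,1})| = {\rm tr}^{(k)}(X_p^+ + O_p^+)$. A period mosaic fixed by $(k,1)$ is determined by its first column $C_0$ via $C_j = \sigma^{jk}(C_0)$, where $\sigma$ is the cyclic up-shift by one row. Writing $s, t \in \{0,1\}^p$ for the left- and right-boundary vertical-edge states of $C_0$, the matching condition between $C_0$ and $C_1$ forces $t = \sigma^k(s)$; the top-bottom periodicity of $C_0$ is already encoded in the $+$-subscripts of $X_p^+, O_p^+$ as in Theorem~\ref{thm:period}, and the horizontal toroidal identification is automatic since $\sigma^p = \mathrm{id}$. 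Under the binary indexing $i(s) = \sum_{\ell=0}^{p-1} s_\ell 2^\ell$ the shift $\sigma$ corresponds to multiplication by $2$ modulo $2^p - 1$, so $i(\sigma^k(s)) \equiv 2^k i(s) \pmod{2^p-1} = \alpha(i(s))$; summing the transfer-matrix entries over $s$ realises $\sum_s (X_p^+ + O_p^+)_{s, \sigma^k(s)}$ as exactly ${\rm tr}^{(k)}(X_p^+ + O_p^+)$, and subtracting the $7$ constant mosaics yields $d_{p_{(k,1)}}$.

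To conclude, the symmetries of the tile set under vertical reflection and transposition imply $|\mathrm{Fix}(H_{k,1})| = |\mathrm{Fix}(H_{p-k,1})|$ and $|\mathrm{Fix}(H_{1,0})| = |\mathrm{Fix}(H_{0,1})|$ respectively, so
\[
\sum_{H} |\mathrm{Fix}(H)| \;=\; 2\sum_{k=0}^{(p-1)/2} |\mathrm{Fix}(H_{k,1})| \;=\; 2\sum_{k=0}^{(p-1)/2} d_{p_{(k,1)}} + 7(p+1),
\]
where $H$ ranges over the $p+1$ order-$p$ subgroups. The number of mosaics in orbits of size $p^2$ is therefore $d_{p^2} = D_P^{(p,p)} - 2\sum_k d_{p_{(k,1)}} - 7$, and summing the three orbit-size contributions $7 + \frac{2\sum_k d_{p_{(k,1)}}}{p} + \frac{d_{p^2}}{p^2}$ gives the claimed identity. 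The main obstacle will be the identification $|\mathrm{Fix}(H_{k,1})| = {\rm tr}^{(k)}(X_p^+ + O_p^+)$: one must pin down the indexing convention so that multiplication by $2^k$ modulo $2^p - 1$ genuinely encodes the $k$-fold cyclic row-shift at the level of boundary states — in particular handling the all-ones state $i = 2^p - 1$, which is $\sigma$-fixed but whose index is absorbed into $0$ by the modular reduction — and then verify that this is compatible with the recursive block structure used to define $X_p^\pm$ and $O_p^\pm$.
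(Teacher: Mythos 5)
Your proposal is correct and follows essentially the same route as the paper: both classify period $(p,p)$-mosaics by their stabilizer under the $(\mathbb{Z}/p)^2$ rotation action (your subgroup-lattice framing is just a cleaner way of stating the paper's ``fundamental period'' case analysis), identify the $t_{k,1}$-fixed mosaics with single columns satisfying $s_r = g_k(s_l)$ counted by ${\rm tr}^{(k)}(X_p^+ + O_p^+)$, use the reflection/transposition symmetries to pair the $p+1$ order-$p$ subgroups, and divide by orbit sizes. The indexing caveat you flag for the all-\textup{o} state $i = 2^p-1$ is a genuine (minor) imprecision present in the paper's own definition of $\alpha$, resolved by taking the residue representative in $\{1,\dots,2^p-1\}$ for $i \neq 0$.
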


Due to these two theorems, we get Table~\ref{tab2} of the precise values of $D_T^{(m,n)}$.
To handle the cases $(m,n)=(2,4)$ and $(4,4)$ in the table, 
we can apply the argument used in the proof of Theorem~\ref{thm:toroidalpp} with slight adjustment.
Toroidal knot mosaics have the same asymptotic behavior\footnote{
{\bf Asymptotic behavior of the growth rate of $D_{T}^{(n,n)}$.} \
We easily deduce that 
$\frac{1}{mn} D_{P}^{(m,n)} \leq D_{T}^{(m,n)} \leq D_{P}^{(m,n)}$
for any positive integers $m$ and $n$
because at most $mn$ different period knot mosaics can be produced
from a toroidal knot mosaic by cyclic rotations.  
This implies that $(D_{T}^{(n,n)})^{\ \frac{1}{n^2}}$ and $(D_{P}^{(n,n)})^{\ \frac{1}{n^2}}$
have the same asymptotic behavior.
} as period knot mosaics.
Remark that, for toroidal knot $(2,2)$--mosaics, Carlisle and Laufer~\cite{CL} found the catalog of all such mosaics,
but there are 22 missing mosaics\footnote{
For toroidal knot $(2,2)$--mosaics, $D_T^{(2,2)}=110$.
In~\cite{CL}, the catalog of all such mosaics is found,
but there are 98 mosaics and among them 10 mosaics are counted twice
($K_{12}=K_{13}$, $K_{14}=K_{15}$, $K_{28}=K_{29}$, $K_{30}=K_{31}$, $K_{65}=K_{78}$, 
$K_{66}=K_{77}$, $K_{67}=K_{79}$, $K_{68}=K_{81}$, $K_{70}=K_{82}$, $K_{99}=K_{104}$).
The 22 missing mosaics are drawn in Figure~\ref{fig:missing}
}.

\begin{table}[h]
\begin{tabular}{cccccc}      \hline \hline
\ $D_T^{(m,n)}$ \ & \ $n=1$ \ & \ $n=2$ \ & \ $n=3$ \ & $n=4$  & $n=5$ \\   \hline
$m=1$ & 7 & 18 & 49 & 171 & 637 \\
$m=2$ &   & 110 & 954 & 11591 & 155310 \\
$m=3$ &   &  & 35237  & 1662837 & 86538181 \\
$m=4$ & & & & 308435024 & 63440607699 \\
$m=5$ & & & &  & 52006454275147 \\  \hline \hline
\end{tabular}
\vspace{3mm}
\caption{List of $D_T^{(m,n)}$}
\label{tab2}
\end{table}

\begin{figure}[ht]
\includegraphics{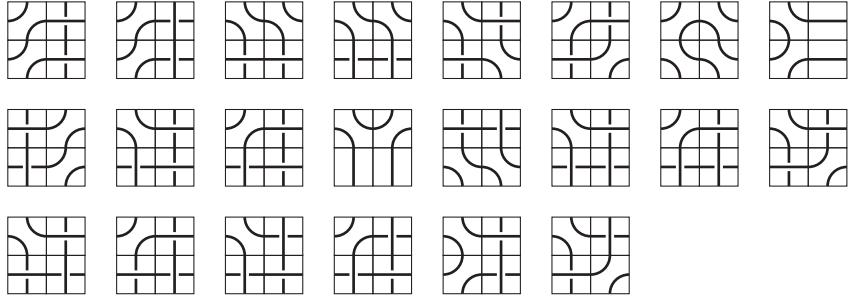}
\caption{22 missing toroidal knot $(2,2)$--mosaics}
\label{fig:missing}
\end{figure}

\section{State matrices}

In this section, we review the notion in~\cite{OHLL}.
Let $m$ and $n$ be positive integers.
$\mathbb{S}^{(m,n)}$ denotes the set of all suitably connected $(m,n)$--mosaics.
So each mosaic possibly has connection points on its boundary edges.
For example, a suitably connected (3,5)--mosaic is depicted in Figure~\ref{fig:suitable}.

\begin{figure}[ht]
\includegraphics{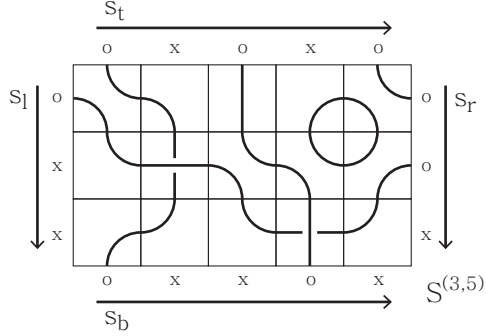}
\caption{Suitably connected (3,5)--mosaic $S^{(3,5)}$}
\label{fig:suitable}
\end{figure}

For simplicity of exposition, a mosaic tile is called $l$--, $r$--, $t$-- and $b$--{\em cp\/}
if it has a connection point on its left, right, top and bottom boundaries, respectively.
We use two or more letters such as $lt$--cp for the case of both $l$--cp and $t$--cp.
The sign $\hat{}$ \/ is for negation so that, for example,
$\hat{l}$--cp means not $l$--cp, and
$\hat{l} \hat{t}$--cp means both $\hat{l}$--cp and $\hat{t}$--cp. \\

\noindent {\bf Choice rule.\/}
Each $M_{ij}$ in a mosaic has four choices of mosaic tiles
as $T_7$, $T_8$, $T_9$ and $T_{10}$ if it has four connection points (i.e., $lrtb$--cp).
It has unique choice if it has no or two connection points.
It does not have odd number of connection points. \\

For a suitably connected $(m,n)$--mosaic $S^{(m,n)} =  (M_{ij})$ where $i=1,\dots,m$ and $j=1,\dots,n$,
an {\em $l$--state\/} of $S^{(m,n)}$ indicates the presence of connection points
on its left boundary edge,
and we denote that $s_l(S^{(m,n)}) = s_l(M_{11}) s_l(M_{21}) \cdots s_l(M_{m1})$
where $s_l(M_{ij})$ denotes ``x'' if $M_{ij}$ is $\hat{l}$--cp and ``o'' if $M_{ij}$ is $l$--cp.
We similarly define $r$--, $t$-- and $b$--states of $S^{(m,n)}$
that indicate the presence of connection points on its right, top and bottom boundary edges, respectively.
For example, the suitably connected $(3,5)$--mosaic $S^{(3,5)}$ drawn in Figure~\ref{fig:suitable} has
$s_l(S^{(3,5)}) =$ oxx, $s_r(S^{(3,5)}) =$ oox, $s_t(S^{(3,5)}) =$ oxoxo and $s_b(S^{(3,5)}) =$ oxxox.
Note that $\mathbb{S}^{(m,n)}$ has possibly $2^m$ kinds of $l$--states and also $2^m$ kinds of $r$--states.
Now we arrange the elements of the set of all states in the reverse lexicographical order such as
xxx, oxx, xox, oox, xxo, oxo, xoo and ooo for $m=3$.

A {\em state matrix\/} for $\mathbb{S}^{(m,n)}$  is a $2^m \! \times \! 2^m$ matrix $N^{(m,n)} = (x_{ij})$
where each entry $x_{ij}$ is the number of all suitably connected $(m,n)$--mosaics
that have the $i$-th $l$--state and the $j$-th $r$--state in the set of $2^m$ states of the order arranged above.
For $n=1$, we split the state matrix $N^{(m,1)}$ into four $2^m \! \times \! 2^m$ matrices,
namely $X_m^+$, $X_m^-$, $O_m^+$ and $O_m^-$ as follows:
each $(i,j)$-entry of $X_m^+$ ($X_m^-$, $O_m^+$ or $O_m^-$) indicates
the number of all suitably connected $(m,1)$--mosaics
that have the $i$-th $l$--state and the $j$-th $r$--state,
and additionally whose ($b$--state, $t$--state) is (x, x)
((x, o), (o, o) or (o, x), respectively).
Note that the letters $X$ and $O$ depend on their $b$--states and
we use the sign $\mbox{}^+$ (or $\mbox{}^-$)
when they have the same (different, respectively) $b$--state and $t$--state.
Note that $N^{(m,1)} = X_m^+ + X_m^- + O_m^+ + O_m^-$.

\section{Period knot mosaics} \label{sec:period}

In this section, we prove Theorem~\ref{thm:period}.
Note that any period knot mosaic has the same $t$--state and $b$--state,
and the same $l$--state and $r$--state because of the suitable $\partial$-connectedness.
First, consider the subset $\mathbb{S}^{(m,n)+}$ of $\mathbb{S}^{(m,n)}$
consisting of all suitably connected $(m,n)$--mosaics
each of which has the same $t$--state and $b$--state.
$N^{(m,n)+}$ denotes a state matrix for $\mathbb{S}^{(m,n)+}$.
Obviously $N^{(m,1)+} = X_m^+ + O_m^+$.
For a $2^{k+1} \! \times \! 2^{k+1}$ matrix $N = (x_{ij})$,
the 11-quadrant (similarly 12-, 21- or 22-quadrant) of $N$ denotes
the $2^k \! \times \! 2^k$ submatrix $(x_{ij})$ where $1 \leq i, j \leq 2^k$
($1 \leq i \leq 2^k$ and $2^k+1 \leq j \leq 2^{k+1}$,
$2^k+1 \leq i \leq 2^{k+1}$ and $1 \leq j \leq 2^k$,
or $2^k+1 \leq i, j \leq 2^{k+1}$, respectively).

\subsection{State matrices for $(1,1)$--mosaics}   \hspace{1cm}

We construct the following matrices directly from Figure~\ref{fig:state11}.
The entries 0, 1 and 4 are determined by Choice rule.
As an example, $(2,2)$-entry of $O_1^+$ is 4 
because the associated mosaic tiles must have four connection points.
$$X_1^+ = \begin{bmatrix} 1 & 0 \\ 0 & 1 \end{bmatrix}, \ \
X_1^- = \begin{bmatrix} 0 & 1 \\ 1 & 0 \end{bmatrix}, \ \
O_1^+ = \begin{bmatrix} 1 & 0 \\ 0 & 4 \end{bmatrix}, \ \
O_1^- = \begin{bmatrix} 0 & 1 \\ 1 & 0 \end{bmatrix} $$
$$\mbox{and} \ \
N^{(1,1)+} = X_1^+ + O_1^+ = \begin{bmatrix} 2 & 0 \\ 0 & 5 \end{bmatrix}.$$
Indeed the mosaics counted in the diagonal entries of $N^{(1,1)+}$ are
the only seven period knot $(1,1)$--mosaics, $T_0$ and $T_5 \sim T_{10}$,
among eleven $(1,1)$--mosaics as shown in the figure.

\begin{figure}[ht]
\includegraphics{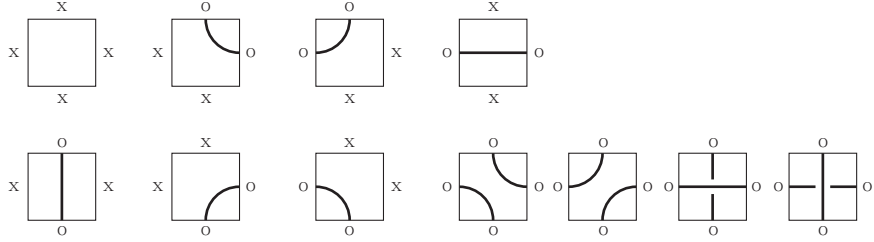}
\caption{States of $(1,1)$--mosaics}
\label{fig:state11}
\end{figure}

\subsection{State matrix $N^{(m,1)+}$ for $\mathbb{S}^{(m,1)+}$}   \hspace{1cm}

We follow the inductive proof of Proposition 2 in~\cite{OHLL} with modification.
The matrices $X_1^+$, $X_1^-$, $O_1^+$ and $O_1^-$ are already known.
Assume that the matrices $X_k^+$, $X_k^-$, $O_k^+$ and $O_k^-$ satisfy the statement.
We will find $O_{k+1}^+$, and the readers can easily apply this method
to find the rest $X_{k+1}^+$, $X_{k+1}^-$ and $O_{k+1}^-$.
All entries of $O_{k+1}^+$ count the suitably connected $(k+1,1)$--mosaics $S^{(k+1,1)} = (M_{i,1})$,
$i=1, \dots, k+1$, in $\mathbb{S}^{(k+1,1)}$ whose ($b$--state, $t$--state) is (o,~o).

If the bottom mosaic tile $M_{k+1,1}$ is $\hat{l} \hat{r}$--cp,
then $S^{(k+1,1)}$ should be counted in an entry of the 11-quadrant of $O_{k+1}^+$
because of the reverse lexicographical order of $2^{k+1}$ states.
In this case, $M_{k+1,1}$ must be the mosaic tile $T_6$.
Let $S^{(k,1)}$ be the associated suitably connected $(k,1)$--mosaic
obtained from $S^{(k+1,1)}$ by deleting $M_{k+1,1}$.
Then ($b$--state, $t$--state) of $S^{(k,1)}$ is also (o, o),
and so the associated state matrix for all possible $S^{(k,1)}$ is $O_{k}^+$.
Indeed $l$--cp (or $r$--cp) is related to $2^k+1 \leq i \leq 2^{k+1}$
(or $2^k+1 \leq j \leq 2^{k+1}$, respectively).
Figure~\ref{fig:findSM} and Table~\ref{tab3} explain all four cases according to
the $l$-- and $r$--states of $M_{k+1,1}$.
Notice that only when $M_{k+1,1}$ is $lrb$--cp,
it has four choices of mosaic tiles $T_7$, $T_8$, $T_9$ and $T_{10}$.
Thus the associated submatrix must be $4 O_k^+$ instead of $O_k^+$.
Now, we get the recurrence relation as
$$ X_{k+1}^+ = \begin{bmatrix} X_k^+ & O_k^- \\ O_k^- & X_k^+ \end{bmatrix}, \ \  \ \ \
   X_{k+1}^- = \begin{bmatrix} X_k^- & O_k^+ \\ O_k^+ & X_k^- \end{bmatrix}, $$
$$ O_{k+1}^+ = \begin{bmatrix} O_k^+ & X_k^- \\ X_k^- & 4 \, O_k^+ \end{bmatrix} \ \ \mbox{and} \ \ \
   O_{k+1}^- = \begin{bmatrix} O_k^- & X_k^+ \\ X_k^+ & 4 \, O_k^- \end{bmatrix}. $$

\begin{figure}[ht]
\includegraphics{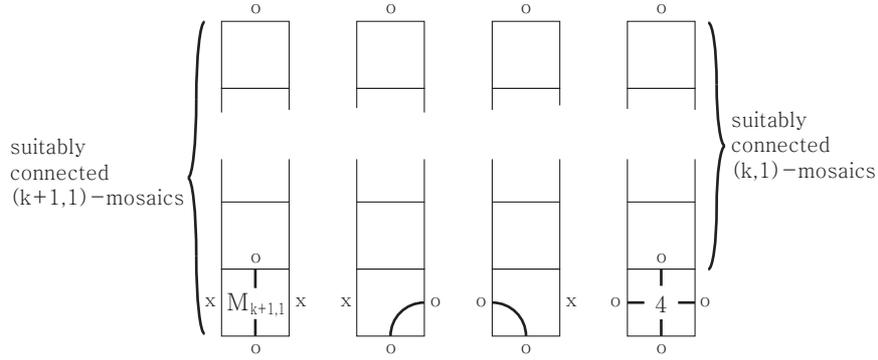}
\caption{Finding the state matrix $O_{k+1}^+$}
\label{fig:findSM}
\end{figure}

\begin{table}[h]
\begin{tabular}{cccc}      \hline  \hline
 & {\em Quadrants\/} & {\em Associated $M_{k+1,1}$\/} & \ {\em Submatrices\/} \ \\    \hline
\multirow{4}{8mm}{$O_{k+1}^+$}
 & \ 11-quadrant ($\hat{l} \hat{r} b$--cp) \ & $T_6$ & $O_k^+$ \\
 & 12-quadrant ($\hat{l} r b$--cp) & $T_2$ & $X_k^-$ \\
 & 21-quadrant ($l \hat{r} b$--cp) & $T_1$ & $X_k^-$ \\
 & 22-quadrant ($l r b$--cp) & \ $T_7$, $T_8$, $T_9$, $T_{10}$ \ & $4 O_k^+$ \\  \hline  \hline
\end{tabular}
\vspace{3mm}
\caption{Four quadrants of $O_{k+1}^+$}
\label{tab3}
\end{table}

\subsection{State matrix $N^{(m,n)+}$ for $\mathbb{S}^{(m,n)+}$}   \hspace{1cm}

We directly follow the proof of Proposition 3 in~\cite{OHLL}.
Using $\mathbb{S}^+$ and $N^+$ instead of $\mathbb{S}$ and $N$, respectively,
is the only difference.
We reprove it for self-containedness of the paper.

\begin{lemma} \label{lem:npq}
$N^{(m,n)+} = (N^{(m,1)+})^n = (X_m^+ + O_m^+)^n.$
\end{lemma}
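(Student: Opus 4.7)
The plan is to proceed by induction on $n$. The base case $n=1$ is immediate, since $N^{(m,1)+} = X_m^+ + O_m^+$ has already been verified in the previous subsection. For the inductive step, I aim to establish the multiplicative identity $N^{(m,n)+} = N^{(m,1)+} \cdot N^{(m,n-1)+}$; iterating this identity then gives the desired formula $(X_m^+ + O_m^+)^n$.

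The combinatorial heart of the argument is a column-split bijection. Any mosaic $M \in \mathbb{S}^{(m,n)+}$ decomposes uniquely as a pair $(M', M'')$, where $M'$ is its leftmost column (an $(m,1)$--mosaic) and $M''$ consists of the remaining $n-1$ columns (an $(m,n-1)$--mosaic). Suitable connectedness of $M$ along the internal interface between column $1$ and column $2$ is equivalent to the condition $s_r(M') = s_l(M'')$; calling this common state $k$, we partition mosaics with prescribed $l$--state $i$ and $r$--state $j$ according to $k$. Conversely, any pair of suitably connected mosaics whose interface states agree glues back into a suitably connected $(m,n)$--mosaic without altering any tile, so the correspondence is bijective.

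For the $+$ constraint, observe that $s_t(M)$ is the concatenation of $s_t(M')$ (a single character, at position 1) with $s_t(M'')$ (the remaining $n-1$ positions), and similarly for $s_b(M)$. Consequently, $s_t(M) = s_b(M)$ if and only if both $s_t(M') = s_b(M')$ and $s_t(M'') = s_b(M'')$, i.e., both pieces individually lie in their respective $+$ subsets. Summing over the intermediate state $k$ then yields
$$(N^{(m,n)+})_{ij} \;=\; \sum_k (N^{(m,1)+})_{ik} \, (N^{(m,n-1)+})_{kj},$$
which is precisely the definition of matrix multiplication.

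The main obstacle, though modest, is ensuring that the Choice rule is respected under this split: a tile of $lrtb$--cp type contributes a factor of $4$ to the count, and this factor must arise with the correct multiplicity when pieces are recombined. Since each tile of $M$ lies wholly in either $M'$ or $M''$, and the four-fold choice is already baked into the entries of $N^{(m,1)+}$ and $N^{(m,n-1)+}$ respectively, multiplicativity is preserved under gluing. Invoking the inductive hypothesis $N^{(m,n-1)+} = (N^{(m,1)+})^{n-1}$ then completes the proof.
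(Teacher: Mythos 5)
Your proof is correct and follows essentially the same route as the paper: induction on $n$ with a column-split bijection, matching the interface state between the two pieces and observing that the condition $s_t = s_b$ splits position-wise so that each piece independently lies in its $+$ subset. The only cosmetic difference is that you peel off the leftmost column rather than the rightmost, which yields the same matrix power.
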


\begin{proof}
Use the induction on $n$.
Assume that $N^{(m,k)+} = (N^{(m,1)+})^k$.
For a mosaic $S^{(m,k+1)}$ in $\mathbb{S}^{(m,k+1)+}$,
split the mosaic into two suitably connected $(m,k)$-- and $(m,1)$--mosaics
$S^{(m,k)}$ and $S^{(m,1)}$ by taking the left $k$ columns and the rightmost column, respectively.
Then $r$--state of $S^{(m,k)}$ is the same as $l$--state of $S^{(m,1)}$ because of suitable connectedness.
Remark that $N^{(m,k+1)+} = (x^{(k+1)}_{ij})$ is the state matrix for $\mathbb{S}^{(m,k+1)+}$
where each entry $x^{(k+1)}_{ij}$ counts the number of all suitably connected $(m,k \! + \! 1)$--mosaics
each of which has the same $t$--state and $b$--state,
and the $i$-th $l$--state and the $j$-th $r$--state in the set of $2^m$ states.
Two state matrices $N^{(m,k)+} = (x^{(k)}_{ij})$
and $N^{(m,1)+} = (x^{(1)}_{ij})$ are defined similarly.
Among these suitably connected $(m,k \! + \! 1)$--mosaics counted in each entry $x^{(k+1)}_{ij}$,
the number of all mosaics whose $r$--state of the $k$-th column
(or equally $l$--state of the $(k \! + \! 1)$-th column) is the $s$-th state
is the product of $x^{(k)}_{is}$ and $x^{(1)}_{sj}$.
Since all $2^m$ states can be appeared as a state of connection points
where $S^{(m,k)}$ and $S^{(m,1)}$ meet,
we get
$$ x^{(k+1)}_{ij} = \sum^{2^m}_{s=1} x^{(k)}_{is} \cdot x^{(1)}_{sj}.$$
This implies that
$ N^{(m,k+1)+} = N^{(m,k)+} \cdot N^{(m,1)+} = (N^{(m,1)+})^{k+1}$.
\end{proof}

\begin{proof}[Proof of Theorem~\ref{thm:period}.]
For positive integers $m$ and $n$,
recall that $N^{(m,n)+}$ is the state matrix for $\mathbb{S}^{(m,n)+}$ that is the set of
all suitably connected $(m,n)$--mosaics each of which has the same $t$--state and $b$--state.
To be a period knot $(m,n)$--mosaic 
(i.e., it additionally satisfies the suitable $\partial$-connectedness),
it must also have the same $l$--state and $r$--state.
The only mosaics counted in the diagonal entries of $N^{(m,n)+}$ have this property.
Thus,
$$ D_{P}^{(m,n)} = {\rm tr}(N^{(m,n)+}) = {\rm tr}(X_m^+ + O_m^+)^n.$$
Note that for the initial condition of the recursion formula,
we may start with the seed matrices $X_0^+ = O_0^+ = \begin{bmatrix} 1 \end{bmatrix}$
and $X_0^- = O_0^- = \begin{bmatrix} 0 \end{bmatrix}$,
instead of $X_1^+$, $O_1^+$, $X_1^-$ and $O_1^-$.
\end{proof}

\section{Toroidal knot mosaics} \label{sec:toroidal}

In this section, we prove Theorems~\ref{thm:toroidalmn} and \ref{thm:toroidalpp}.
Recall that two period knot mosaics $M$ and $M'$ are equivalent
if one can be obtained from the other by a finite sequence of cyclic rotations.
Let $M=(M_{i,j})$ be a period knot $(m,n)$--mosaic.
$[M]$ denotes a toroidal knot mosaic that is an equivalence class of $M$.
Let $f$ be the quotient map from $\mathbb{K}_{P}^{(m,n)}$ to $\mathbb{K}_T^{(m,n)}$
defined by $f(M) = [M]$.
We define $t_{x,y}(M)=(M'_{i,j})$ where $M'_{i,j} = M_{i-x,j-y}$ for all $i,j$,
performing cyclic rotations by $x$ rows and $y$ columns.
We use sets of indices $\{ 1,2, \dots, m \}$ and $\{ 1,2, \dots, n \}$ as complete residue systems
modulo $m$ and $n$, respectively.
In $\mathbb{K}_T^{(m,n)}$, $[M] = [t_{x,y}(M)]$ for all $x$ and $y$.

$M$ is called a {\em $(p,q)$--f.period\/} knot $(m,n)$--mosaic
(or, a period knot $(m,n)$--mosaic with a fundamental period $(p,q)$)
if $p$ and $q$ are smallest positive integers
such that $M = t_{p,0}(M) = t_{0,q}(M)$ as the top figure in Figure~\ref{fig:periodicexam}.
Thus $M = t_{ap,bq}(M)$ for any integers $a$ and $b$.
Note that $M = t_{p,q}(M)$ is not the sufficient condition of being 
a $(p,q)$--f.period  knot $(m,n)$--mosaic.
Let $d_{p,q}$ be the total number of all $(p,q)$--f.period knot $(m,n)$--mosaics.
Note that $d_{1,1} = 7$.

\begin{figure}[ht]
\includegraphics{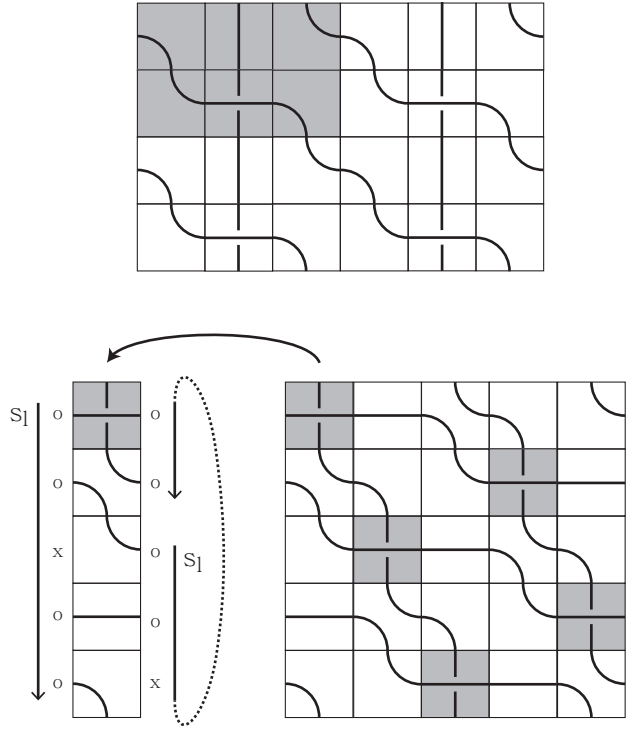}
\caption{A $(2,3)$--f.period knot $(4,6)$--mosaic (top) and 
a $5_{(2,1)}$--f.period knot $(5,5)$--mosaic (bottom)}
\label{fig:periodicexam}
\end{figure}

\begin{proof}[Proof of Theorem~\ref{thm:toroidalmn}.]
Suppose that $m$ and $n$ are positive co-prime integers.
If $M$ is a $(p,q)$--f.period knot $(m,n)$--mosaic,
then $m$ and $n$ must be divisible by $p$ and $q$, respectively.
Otherwise, assume that $m$ is not divisible by $p$.
There are two positive integers $a,b$ such that $ap = m+b$ where $1 \leq b < p$.
Then $M = t_{ap,0}(M) = t_{b,0}(M)$, and so $M$ is a $(b,q)$--f.period knot $(m,n)$--mosaic
for $b$ which is less than $p$, a contradiction.
Therefore $\mathbb{K}_{P}^{(m,n)}$ is a disjoint union of sets of
all $(p,q)$--f.period knot $(m,n)$--mosaics
for all possible pairs of factors $p$ and $q$ of $m$ and $n$, respectively.
Thus $D_{P}^{(m,n)} = \sum_{p | m, \, q | n} d_{p,q}$.

Indeed a $(p,q)$--f.period knot $(m,n)$--mosaic is merely
$\frac{m}{p} \times \frac{n}{q}$ copies of a period knot $(p,q)$--mosaic arrayed as a checkerboard.
This guarantees that for any $p | m$ and $r | p$, and $q | n$ and $s | q$,
the total number $d_{r,s}$ of all $(r,s)$--f.period knot $(m,n)$--mosaics
is the same as the total number of all $(r,s)$--f.period knot $(p,q)$--mosaics.
Thus we have a similar equation 
$D_{P}^{(p,q)} = \sum_{r | p, \, s | q} d_{r,s}$, or
$$d_{p,q} = D_{P}^{(p,q)} - \sum_{r | p, \, s | q, \, rs \neq pq} d_{r,s}.$$

Now consider the set $f^{-1}([M])$ of the pre-image of $[M]$ 
for a $(p,q)$--f.period knot $(m,n)$--mosaic $M$.
We will show that $f^{-1}([M])$ consists of exactly $pq$ totally different elements,
i.e., all cyclic rotations $t_{x,y}(M)$'s for $x=0,1, \dots, p \! - \! 1$ and  $y=0,1, \dots, q \! - \! 1$ are totally different.
Assume for contradiction that $t_{x',y'}(M) = t_{x'',y''}(M)$ for different pairs of integers,
or $t_{x,y}(M) = M$ where $x=x'-x''$ and $y=y'-y''$.
We may say that $x=1, \dots,$ or $p \! - \! 1$ and $y=1, \dots,$ or $q \! - \! 1$.
Let $g$ be the greatest common divisor of $x$ and $p$.
Then $M = t_{\frac{p}{g} x, \frac{p}{g} y}(M) = t_{0,\frac{py}{g}}(M)$
because $\frac{p}{g}$ is a positive integer and $\frac{p}{g} x$ is indeed divisible by $p$.
But $\frac{py}{g}$ can not be divisible by $q$ because $p$ and $q$ are co-prime
and $\frac{y}{g}$ is smaller than $q$.
Let $k$ be the non-zero remainder of $\frac{py}{g}$ divided by $q$.
Then $M = t_{0,\frac{py}{g}}(M) = t_{0,k}(M)$, and so $M$ is a $(p,k)$--f.period knot $(m,n)$--mosaic.

This proves that for any pairs of factors $p$ and $q$ of $m$ and $n$, respectively,
the total number of all toroidal knot $(m,n)$--mosaics with a fundamental period $(p,q)$ 
is $\frac{1}{pq} d_{p,q}$.
Therefore 
$$D_T^{(m,n)} = \sum_{p | m, \, q | n} \frac{1}{pq} d_{p,q}.$$
\end{proof}

Now consider a period knot $(p,p)$--mosaic $M$ in 
$\mathbb{K}_{P}^{(p,p)}$ for a prime integer~$p$.
$M$ is called a {\em $p_{(k,1)}$--f.period\/} knot $(p,p)$--mosaic for $k=0,1,\dots,p \! - \! 1$ 
if it is not a $(1,1)$--f.period knot $(p,p)$--mosaic and 
$M = t_{k,1}(M)$ as the bottom figure in Figure~\ref{fig:periodicexam}.
Especially a $p_{(0,1)}$--f.period knot $(p,p)$--mosaic means 
a $(p,1)$--f.period knot $(p,p)$--mosaic.
Let $d_{p_{{(k,1)}}}$ be the total number of all $p_{(k,1)}$--f.period knot $(p,p)$--mosaics
and $d_{p^2}$ be the rest period knot $(p,p)$--mosaics
which are not $(1,1)$--, $(1,p)$-- or $p_{(k,1)}$--f.period knot $(p,p)$--mosaics for all $k$.

\begin{proof}[Proof of Theorem~\ref{thm:toroidalpp}.]
Suppose that $p$ is a prime integer.
We easily know that $(1,1)$--, $(1,p)$-- and 
$p_{(k,1)}$--f.period knot $(p,p)$--mosaics for all $k$ are totally different.
Therefore $D_{P}^{(p,p)} = d_{p^2} + d_{1,p} + \sum_{k=0}^{p-1} d_{p_{{(k,1)}}} + d_{1,1}$.
Since $d_{1,p} = d_{p,1} = d_{p_{{(0,1)}}}$ and $d_{p_{{(k,1)}}} = d_{p_{{(p-k,1)}}}$ 
from the natural symmetries of the torus,
$$d_{p^2} = D_{P}^{(p,p)} -  2 \sum_{k=0}^{\frac{p-1}{2}} d_{p_{{(k,1)}}} - 7,$$
except when $p=2$,
$d_{2^2} = D_{P}^{(2,2)} -  2 d_{2_{{(0,1)}}} - d_{2_{{(1,1)}}} - 7$.
Recall that $d_{1,1} = 7$.

Now we count the number $d_{p_{{(k,1)}}}$ of $p_{(k,1)}$--f.period $(p,p)$--mosaics.
From the definition, we only need to count the number of suitably connected $(p,1)$--mosaics
$M = (M_{i,1})$ such that $s_t(M)=s_b(M)$ and $s_l(M_{i,1})=s_r(M_{i+k,1})$ for all $k$.
The latter means $g_k(s_l(M))=s_r(M)$
where $g_k$ shifts a word to the right by $k$ letters cyclicwise,
as for example $g_2(abcde)=deabc$.
Indeed, $g_k$ send an $(i \! + \! 1)$-th state among $2^p$ states to an $(\alpha(i) \! + \! 1)$-th state
where $\alpha(i) = 2^k i \ (\mbox{mod } 2^p \! - \! 1)$.
Therefore such $(p,1)$--mosaics are counted in $(i \! + \! 1, \alpha(i) \! + \! 1)$-entries of $X^+_m + O^+_m$
for $i=0,1, \dots, 2^p-1$.
Note that $d_{p_{{(k,1)}}}$ does not count seven $(1,1)$--f.period $(p,p)$--mosaics.
Therefore,
$$d_{p_{{(k,1)}}} = {\rm tr}^{(k)}(X_p^{+} + O_p^{+}) - 7.$$

Now consider the set $f^{-1}([M])$ of the pre-image of $[M]$.
If $M$ is a $p_{(k,1)}$--f.period $(p,p)$--mosaic, 
$f^{-1}([M])$ consists of exactly $p$ totally different elements
which are all cyclic rotations $t_{x,0}(M)$'s for $x=0,1, \dots, p-1$.
If $M$ is not $(1,1)$--, $(1,p)$-- or $p_{(k,1)}$--f.period $(p,p)$--mosaics for all $k$,
$f^{-1}([M])$ consists of exactly $p^2$ totally different elements.
Therefore,
$$D_T^{(p,p)} = \frac{1}{p^2} d_{p^2} + \frac{2}{p} \ \sum_{k=0}^{\frac{p-1}{2}} d_{p_{{(k,1)}}} + 7,$$
except when $p=2$,
$D_T^{(2,2)} = \frac{1}{2^2} d_{2^2} + \frac{1}{2} (2  d_{2_{{(0,1)}}} + d_{2_{{(1,1)}}}) + 7 = 110$.
\end{proof}

\end{document}